\newtheorem{thm}{Theorem}
\newtheorem{lem}[thm]{Lemma}
\theoremstyle{definition}
\newtheorem{remark}{Remark}
\newcommand{\IN}{\mathbb{N}}
\newcommand{\IP}{\mathbb{P}}
\newcommand{\IR}{\mathbb{R}}
\newcommand{\cH}{\mathcal{H}}
\newcommand{\cK}{\mathcal{K}}
\newcommand{\vol}{\mathrm{vol}}
\newcommand{\dist}{\mathrm{dist}}
\newcommand{\proj}{\mathrm{proj}}
\DeclareMathOperator{\conv}{conv}
\DeclareMathOperator{\len}{len}
\newcommand{\curv}{\mathcal{C}}
\newcommand{\aff}{\mathrm{aff}}
\newcommand{\dd}{\mathrm{d}}
\title[A limit theorem for random inscribed polytopes]{A limit theorem for Hausdorff approximation by random inscribed polytopes}
\author[M.~Sonnleitner]{Mathias Sonnleitner}
\address[]{Institute for Financial Mathematics and Applied Number Theory, 
Johannes Kepler University Linz,\\ Altenbergerstrasse 69, 4040 Linz, Austria}
\email{mathias.sonnleitner@jku.at}
\subjclass[2020]{52A27; 60D05; 60F05 (Primary)  52C17; 60G70 (Secondary)}
\keywords{convex body approximation; random polytope; Gumbel distribution; covering radius; second fundamental form}
\begin{document}

\begin{abstract}	
Approximate a smooth convex body $K$ with nonvanishing curvature by the convex hull of $n$ independent random points sampled from its boundary $\partial K$. In case the points are distributed according to the optimal density, we prove that the rescaled approximation error in Hausdorff distance tends to a Gumbel distributed random variable.  The proof is based on an asymptotic relation to covering properties of random geodesic balls on $\partial K$ and on a limit theorem due to Janson.

\end{abstract}

\maketitle

\section{Introduction and main result}

Let $K_n=\conv\{X_1,\dots,X_n\}$ be the convex hull of independent random points $X_1,\dots,X_n$ distributed on the boundary $\partial K$ of a smooth convex body $K\subset \IR^d$, i.e. a convex compact set with nonempty interior. The Hausdorff distance between $K_n$ and $K$ is given by
\begin{align*}
\delta_H(K_n,K)
=\max_{x\in \partial K}\min_{y\in K_n}\|x-y\|,
\end{align*}
where $\|\cdot\|$ denotes the Euclidean norm. Geometrically, it measures the maximal height of a cap of $K$ cut off by a facet of $K_n$. We are interested in the asymptotic behavior of the random variable $\delta_H(K_n,K)$ as $n\to \infty$. 

Assume that $K$ belongs to $\cK^3_+$, the set of convex bodies having three times continuously differentiable boundary with everywhere positive Gaussian curvature $\kappa_K$. If the random points $X_1,\dots,X_n$ are distributed on $\partial K$ according to a continuously differentiable density $h\colon \partial K\to (0,\infty)$ with respect to the $(d-1)$-dimensional Hausdorff measure $\cH^{d-1}$, then by \cite{GS96} (see \cite{Sch88} for the planar case) it holds that
\begin{equation}
	\label{eq:gs-smooth}
\hskip -5mm
\left(\frac{n}{\log n}\right)^{\frac{2}{d-1}}\delta_{H}(K_n,K)
\xrightarrow[]{\mathbb{P}}
\frac{1}{2}\left(
\frac{1}{\kappa_{d-1}}\max_{x\in\partial K}\frac{\sqrt{\kappa_{K}(x)}}{h(x)}
\right)^{\frac{2}{d-1}},
\end{equation}
where $\kappa_{d-1}$ is the volume of the $(d-1)$-dimensional Euclidean unit ball and $\overset{\mathbb{P}}{\to}$ denotes convergence in probability. Moreover, the right-hand side of \eqref{eq:gs-smooth} is minimal for the density 
\begin{equation} \label{eq:h-opt}
h_{\kappa}(x)
=\frac{\sqrt{\kappa_K(x)}}{v_{\kappa}(K)}, \quad x\in \partial K, \qquad \text{where }
v_{\kappa}(K):=\int_{\partial K} \sqrt{\kappa_K(x)}\dd \cH^{d-1}(x).
\end{equation}

\begin{remark}
The convergence \eqref{eq:gs-smooth} holds also in expectation, see \cite{DW96,PSS+24}. 
\end{remark}

The density $h_{\kappa}$  also appears when studying optimal point sets. Namely, the sequence of optimal point sets $P_n^*=\{x_1,\dots,x_n\}$ minimizing $\delta_H(\conv(P_n),K)$ over all $n$-point sets $P_n\subset \partial K$ is uniformly distributed with density $h_{\kappa}$, that is, 
\[
\lim_{n\to \infty}\frac{|P_n^*\cap A|}{n} =\int_A h_{\kappa}(x)\dd\cH^{d-1}(x)\qquad \text{ for any Borel set }A\subset \partial K,
\]
and, similar to \eqref{eq:gs-smooth} for $h=h_{\kappa}$, the optimal points $P_n^*$ satisfy 
\[
\lim_{n\to \infty}n^{\frac{2}{d-1}}\delta_H(\conv(P_n^*),K)
=\frac{1}{2}\left( \frac{\vartheta_{d-1}}{\kappa_{d-1}}v_{\kappa}(K)
\right)^{\frac{2}{d-1}},
\]
where $\vartheta_{d-1}$ is the minimal covering density of $\IR^{d-1}$ by unit balls, see \cite{GS96,Gru93,Sch81}. 

If the error of the random approximation of $K$ by $K_n$ is measured in terms of missed volume instead of Hausdorff distance, then more precise asymptotics are available. For example, there is a central limit theorem due to \cite{Tha18}, see also the variance bounds in \cite{Rei03,RVW08} and related central limit theorems in \cite{BR10,BT20,CSY13,Hsi94,LSY19,Rei05,TTW18,Vu06}. Moreover, to minimize the volume difference, it is optimal to sample proportional to $\kappa_K^{1/(d+1)}$. Random points sampled according to this density behave similarly to optimal points, particularly in high dimension, see \cite{PSW22,Rei02,SW03}.

If $K_n$ is the convex hull of independent random points distributed in the interior of $K$, it was proven in \cite{BHB98} for $d=2$ that, for suitable scalars $a_n$ and $b_n$, as $n\to \infty$,
\begin{equation} \label{eq:extreme-limit}
	\frac{\delta_{H}(K_n,K)-a_n}{b_n}\xrightarrow[]{\rm d} G,
\end{equation}
where $G$ is a standard Gumbel random variable, i.e. $\IP[G\le x]=\exp(-e^{-x})$, $x\in \IR$, and $\xrightarrow[]{\rm d}$ denotes convergence in distribution. Thus, the random variables $\delta_H(K_n,K), n\in \IN,$ exhibit Gumbel fluctuations similarly to maxima of independent random variables. The proof in \cite{BHB98} relies on a blocking technique together with results from extreme-value theory for weakly dependent random variables. For the case $d\ge 3$ we refer to \cite{CY25+}.

Our main result states that a limit theorem of the form \eqref{eq:extreme-limit} holds provided the random points are distributed on $\partial K$ with respect to the density $h_{\kappa}$ as in \eqref{eq:h-opt}. 

\begin{thm}\label{thm:main}
Let $K\in \cK^3_+$. For each $n\in \IN$, let $X_1,\dots,X_n$ be independently distributed on $\partial K$ with density $h_{\kappa}$ as given in \eqref{eq:h-opt} and let $K_n=\conv\{X_1,\dots,X_n\}$.  Let 
\[
c_n=\Big(\frac{v_{\kappa}(K)}{\kappa_{d-1}}\frac{\log n}{n}\Big)^{\frac{2}{d-1}} \qquad \text{and}\qquad \alpha=\frac{1}{(d-1)!}\Big(\frac{\sqrt{\pi}\Gamma(\frac{d+1}{2})}{\Gamma(\frac{d}{2})}\Big)^{d-2},
\]
where $v_{\kappa}(K)=\int_{\partial K} \sqrt{\kappa_K(x)}\dd \cH^{d-1}(x)$ is as in \eqref{eq:h-opt}. Then \eqref{eq:extreme-limit} holds with
\[
a_n=\Big(\frac{1}{2}+\frac{(d-2)\log\log n+\log \alpha}{(d-1)\log n}\Big)\cdot c_n \qquad \text{and}\qquad b_n=\frac{c_n}{(d-1)\log n}.
\]
\end{thm}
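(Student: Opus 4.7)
The plan, foreshadowed in the abstract, is to reduce $\delta_H(K_n,K)$ to the maximal vacancy radius of a random covering on $\partial K$ in a suitable metric, and then to invoke Janson's limit theorem for random coverings of a $(d-1)$-dimensional manifold. For $x\in\partial K$ and $t>0$, let $C_x(t):=\{y\in K:\langle n_x,x-y\rangle\le t\}$ denote the cap of height $t$ at $x$ (with $n_x$ the outer unit normal), and set $T_x:=\sup\{t:C_x(t)\cap\{X_1,\dots,X_n\}=\emptyset\}$. A first step is to show that, with high probability, $\delta_H(K_n,K)=\max_{x\in\partial K}T_x+o(b_n)$: the maximum of $\dist(\cdot,K_n)$ over $\partial K$ is attained at a point where the nearest facet of $K_n$ is almost parallel to the tangent hyperplane at $x$, so to second order the cap height $T_x$ coincides with $\dist(x,K_n)$ up to errors controlled by $\cK^3_+$-smoothness.

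Next, Taylor expanding $\partial K$ in normal coordinates at $x$, the cap $C_x(t)$ projects to an ellipsoid in $T_x\partial K$ of $(d-1)$-volume $\kappa_{d-1}(2t)^{(d-1)/2}/\sqrt{\kappa_K(x)}\cdot(1+O(t))$. Multiplying by the density $h_\kappa(x)=\sqrt{\kappa_K(x)}/v_\kappa(K)$ yields the key \emph{$x$-independent} cap probability
\[
\IP[X_1\in C_x(t)]=\frac{\kappa_{d-1}(2t)^{(d-1)/2}}{v_\kappa(K)}\bigl(1+O(t)\bigr),
\]
which is precisely why $h_\kappa$ is the natural sampling density for Hausdorff approximation. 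After identifying $C_x(t)$ with a geodesic ball of radius $\sqrt{2t}$ in the Riemannian metric on $\partial K$ induced by the second fundamental form (rescaled by $\kappa_K^{1/(2(d-1))}$), the question becomes that of the vacancy radius in a \emph{uniform} random covering of a compact $(d-1)$-manifold of total mass $v_\kappa(K)$. Janson's limit theorem for such coverings then yields
\[
\frac{n\kappa_{d-1}(2T^*)^{(d-1)/2}}{v_\kappa(K)}-\log n-(d-2)\log\log n-\log\alpha\xrightarrow{\mathrm{d}}G,\qquad T^*:=\max_{x\in\partial K} T_x,
\]
with the constant $\alpha$ arising from the pair-interaction integral in the second-moment/Chen--Stein step of Janson's argument and matching the formula in the theorem. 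Solving for $T^*$ and expanding the $(2/(d-1))$-th power of $1+O(\log\log n/\log n)$ produces $T^*=a_n+b_n G+o_{\IP}(b_n)$, which combined with the first step proves \eqref{eq:extreme-limit}.

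The main obstacle will be the quantitative error analysis: the target scale $b_n=c_n/((d-1)\log n)$ is only a logarithm smaller than $c_n$, so each approximation---the replacement of $\delta_H(K_n,K)$ by $\max_x T_x$, the ellipsoidal approximation of caps, and the passage to the uniformized setting of Janson's theorem---must be controlled to within $o(c_n/\log n)$. In practice this will require (i) uniform $C^3$-estimates for the cap geometry at scales $t\sim c_n$, valid across all $x\in\partial K$; (ii) a quantitative Chen--Stein Poisson approximation for the number of empty near-critical caps centered on a net in $\partial K$ of spacing $\sim\sqrt{c_n}$, exploiting that caps at base points separated by more than this distance are essentially independent under $h_\kappa$; and (iii) a verification that the variable curvature and density enter only through lower-order corrections absorbed in $o(b_n)$.
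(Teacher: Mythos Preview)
Your plan is essentially the paper's proof: reduce $\delta_H(K_n,K)$ to a geodesic covering radius on $(\partial K,\text{II})$, observe that $h_\kappa$ is the normalised Riemannian volume of this metric so the points are uniform, invoke Janson's maximal-spacing theorem, and finish with a power expansion (the Delta Method). Two small corrections to your write-up: no rescaling of the second fundamental form by $\kappa_K^{1/(2(d-1))}$ is needed, since the volume form of $\mathrm{II}$ is already $\sqrt{\kappa_K}\,\dd\cH^{d-1}$; and step~(ii) is unnecessary, because Janson's theorem applies as a black box to the Riemannian manifold $(\partial K,\mathrm{II})$ once you verify the uniform small-ball volume asymptotics $\vol_\gamma(B_\gamma(x,r))=\kappa_{d-1}r^{d-1}+O(r^d)$---the paper isolates precisely this as a lemma, together with the deterministic bound $|\delta_H(\conv P,K)-\varrho(P)^2/2|\le C\varrho(P)^3$, which is the clean form of your step~(i).
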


In particular, Theorem~\ref{thm:main} implies the convergence in \eqref{eq:gs-smooth} for $h=h_{\kappa}$. It remains open to extend it to other sampling densities.

The proof of Theorem~\ref{thm:main} is based on a limit theorem of the form \eqref{eq:extreme-limit} for maximal spacings due to \cite{Jan87}, see Lemma~\ref{lem:janson} below, and an asymptotic relation between the geodesic covering radius of a point set on the boundary $\partial K$ and the Hausdorff distance between its convex hull to $K\in \cK^3_+$. Here, for any finite point set $P\subset \partial K$, the geodesic covering radius $\varrho(P)$ is given by
\[
\varrho(P)
=\inf\Big\{\varrho>0\colon \partial K\subset \bigcup_{x\in P} B_{\gamma}(x,\varrho)\Big\},
\]
where $B_{\gamma}(x,\varrho)$ denotes a geodesic ball on $\partial K$ with center $x\in \partial K$ and radius $\varrho>0$ with respect to the metric $\gamma$ induced by the second fundamental form, as explained below.  The proof of \eqref{eq:gs-smooth} relies on the asymptotic relation
\begin{equation*} 
\delta_H(\conv(P),K)
=\frac{\varrho(P)^2}{2}(1+o(1))\qquad \text{as}\qquad \varrho(P)\to 0,
\end{equation*}
proven in \cite[Theorem~4]{Gru93}. For the proof of Theorem~\ref{thm:main} we require the following refinement.

\begin{thm}\label{thm:hd-covering}
Let $K\in \cK^3_+$. There exist $\varrho>0$ and $C>0$ such that, for any finite point set $P\subset \partial K$ with $\varrho(P)\le \varrho$,
\[
\Big|\delta_H(\conv(P),K)-\frac{\varrho(P)^2}{2}\Big|\le C\varrho(P)^3.
\]
\end{thm}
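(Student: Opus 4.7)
The plan is to localize near each $x_0 \in \partial K$, establish a uniform cubic-error comparison between Euclidean height and squared $\gamma$-distance, and then bracket $\delta_H(\conv(P),K)$ by matching upper and lower bounds via supporting hyperplane arguments. Concretely, the central estimate to be proved is that there exist $\varrho_0, C_1 > 0$ such that, for every $x_0 \in \partial K$ and every $p \in \partial K$ with $\gamma\text{-}\dist(x_0,p) \le \varrho_0$,
\[
\bigl|\dist(p,T_{x_0}) - \tfrac12\,\gamma\text{-}\dist(x_0,p)^2\bigr|\le C_1\,\gamma\text{-}\dist(x_0,p)^3,
\]
where $T_{x_0}$ denotes the Euclidean tangent hyperplane at $x_0$. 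This refines the leading-order asymptotic that underlies Gruber's proof.

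To prove the comparison, I work in coordinates with $x_0=0$ and outer unit normal $e_d$, writing $\partial K$ locally as the graph of a $C^3$ function $f$ with $f(0)=0$, $\nabla f(0)=0$, and $-\nabla^2 f(0)=A_{x_0}$, the matrix of the second fundamental form. Compactness of $\partial K$ and uniform $C^3$ regularity yield
\[
\dist(p,T_{x_0})=\tfrac12\langle A_{x_0}p',p'\rangle+O(\|p'\|^3)
\]
for $p=(p',f(p'))$, with constant uniform in $x_0$. The coordinate entries $\gamma_{ij}(p')=(\nabla^2 f)_{ij}(p')/\sqrt{1+|\nabla f(p')|^2}=(A_{x_0})_{ij}+O(\|p'\|)$ of the metric then give $\gamma\text{-}\dist(x_0,p)^2=\langle A_{x_0}p',p'\rangle+O(\|p'\|^3)$, by bounding from above the $\gamma$-length of the straight chord from $0$ to $p'$ and using a standard lower bound for Riemannian distances. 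Combined with the equivalence $\|p'\|\asymp\gamma\text{-}\dist(x_0,p)$, these two expansions imply the comparison.

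For the upper bound, let $x_0\in\partial K$ attain $h:=\delta_H(\conv(P),K)=\dist(x_0,\conv(P))$. Maximality on $\partial K$ forces the outer normal of $K$ at $x_0$ to coincide with that of $\conv(P)$ at the metric projection, so the supporting hyperplane of $\conv(P)$ there is parallel to $T_{x_0}$ at Euclidean distance $h$ below it, and every $p\in P$ satisfies $\dist(p,T_{x_0})\ge h$. Picking $p\in P$ with $\gamma\text{-}\dist(x_0,p)\le\varrho(P)\le\varrho_0$ and applying the comparison gives $h\le\tfrac12\varrho(P)^2+C_1\varrho(P)^3$. For the lower bound, choose by compactness $x_0\in\partial K$ with $\min_{p\in P}\gamma\text{-}\dist(x_0,p)=\varrho(P)$, and let $h_\ast:=\min_{p\in P}\dist(p,T_{x_0})$, attained at some $p_\ast\in P$. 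Since $\conv(P)$ lies in $\{y:\dist(y,T_{x_0})\ge h_\ast\}$, whose bounding hyperplane separates $x_0$ from $\conv(P)$ at Euclidean distance $h_\ast$, we have $\delta_H(\conv(P),K)\ge h_\ast$. The upper-bound argument also shows $h_\ast\le\varrho(P)^2$ for small $\varrho(P)$; combined with a uniform lower bound $\dist(q,T_{x_0})\ge c_0>0$ whenever $\gamma\text{-}\dist(x_0,q)\ge\varrho_0$ (from smoothness, positive curvature, and compactness), this forces $\gamma\text{-}\dist(x_0,p_\ast)\le\varrho_0$. Since $\gamma\text{-}\dist(x_0,p_\ast)\ge\varrho(P)$ by the choice of $x_0$ and $r\mapsto r^2/2-C_1 r^3$ is increasing on a neighborhood of zero, the comparison yields $h_\ast\ge\tfrac12\varrho(P)^2-C_1\varrho(P)^3$, closing the proof.

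The main obstacle is the cubic-error expansion $\gamma\text{-}\dist(x_0,p)^2=\langle A_{x_0}p',p'\rangle+O(\|p'\|^3)$ uniform in $x_0$. This requires controlling Riemannian geodesic distances in a metric whose components vary only $C^1$ over the compact base $\partial K$, so the chord-versus-geodesic comparison and the Taylor remainders must be executed with constants independent of the base point $x_0$.
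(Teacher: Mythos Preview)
Your proposal is correct and follows essentially the same strategy as the paper: both hinge on the identical local comparison lemma (your ``central estimate'' is exactly the paper's Lemma stating $|\dist(y,H_K(x))-\gamma(x,y)^2/2|\le C\gamma(x,y)^3$), and both deduce the theorem by sandwiching $\delta_H$ via supporting-hyperplane arguments. The only notable difference is in the extraction of the lower bound: the paper argues facet by facet, showing for each facet $F$ that the cap $C^{\partial K}(x_F,h_F)$ must meet $P$ and hence the boundary of $B_\gamma(x_F,\varrho_F)$, whereas you go directly to the point $x_0$ realizing $\varrho(P)$, take $p_\ast\in P$ minimizing $\dist(\cdot,T_{x_0})$, and use the far-point bound $\dist(q,T_{x_0})\ge c_0$ for $\gamma(x_0,q)\ge\varrho_0$ to force $p_\ast$ into the range of the local lemma. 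Your route is slightly more direct and avoids the paper's facetwise bookkeeping; the paper's version has the minor advantage of yielding the finer per-facet estimate $|h_F-\varrho_F^2/2|\le C\varrho_F^3$.
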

	If $K$ is a Euclidean ball and $\varrho(P)$ is taken with respect to the Euclidean metric, then Theorem~\ref{thm:hd-covering} holds even with $C=0$, see e.g.~\cite[Lemma 7.1]{BW03}, and Theorem~\ref{thm:main} can be deduced from the limit theorem in \cite[Theorem 3.2]{Ser23} using the Delta Method. 

If $K$ is a polytope, only the distance from $K_n$ to the vertices of $K$ contributes to the Hausdorff distance. In case of $d=2$, a limit distribution containing one factor per vertex was obtained in \cite{BHB98} for points distributed inside and in \cite{PSS+24} for points distributed on the boundary.  It remains open to obtain analogous results in higher dimensions.

\begin{remark}
The related problem of deriving a limit theorem for the diameter of the random convex hull $K_n$ was solved in \cite{MM07} for points sampled uniformly in the unit ball or on the unit sphere. The limiting distribution is Weibull, and Gumbel if the dimension grows with the number of points \cite{HK25}. 
\end{remark}

\section{Proofs}

In the following, as in \cite{GS96,Gru93}, we equip $\partial K$ with a Riemannian structure induced by the second fundamental form. This is combined with a uniform local approximation of the boundary $\partial K$ by osculating paraboloids, as used for example in \cite{Rei05}, to get uniform bounds on the metric structure of $\partial K$. We deduce Theorem~\ref{thm:hd-covering} from a local version for individual caps. We prove volume asymptotics for small geodesic balls in Lemma~\ref{lem:vol-geo} below, assuming only $K\in \cK^3_+$, and then use the Delta Method to deduce Theorem~\ref{thm:main} from the limit theorem for maximal spacings in \cite{Jan87}.

Let $K\in \cK^3_+$ and $p\in \partial K$. Then there is a unique affine hyperplane $H_K(p)$ supporting $\partial K$ at $p$. Write $\proj_p$ for the orthogonal projection onto $H_K(p)$. Let $U=U^{(p)}$ be an open neighborhood of $p$ in $\partial K$ such that $\proj_p$ maps $U$ homeomorphically onto an open ball $U'=\proj_p(U)\subset H_K(p)$.  Identifying $H_K(p)$ with $\IR^{d-1}$ with $p$ as the origin, parametrize $U$ in the form $U=\{(u,f^{(p)}(u))\colon u\in U'\}$, where $f^{(p)}(u)$ is the distance of $u\in U'$ to its preimage $\proj_p^{-1}(u)\in U$. By assumption, the function $f^{(p)}$ is three times continuously differentiable and convex on $U'$. Moreover, see \cite[Section~5]{Rei05}, for any $\delta>0$ there exists $\lambda>0$ such that, for all $p\in \partial K$ and all $y\in B^{(p)}(0,4\lambda):=\{x\in H_K(p)\colon \|x-p\|\le 4\lambda\}$,
\begin{equation} \label{eq:osculating}
(1+\delta)^{-1}b_0^{(p)}(y)
\le f^{(p)}(y)
\le (1+\delta)b_0^{(p)}(y),
\end{equation}
where $b_0^{(p)}$ is the quadratic form induced by the Hessian of $f^{(p)}$ at $0$ and given by
\[
b_0^{(p)}(y)=\frac{1}{2}\sum_{i,j}f_{ij}^{(p)}(0)y_iy_j,\qquad y\in \IR^{d-1}.
\]
Here and in the following, $f^{(p)}_{ij}$ and $f^{(p)}_k$ denote partial derivatives of $f^{(p)}$ and indices $i,j,k$ run from one to $d-1$. The Gaussian curvature $\kappa_K(p)$ at $p\in \partial K$ is given by the determinant of the matrix $(f_{ij}^{(p)}(0))_{i,j}$ and remains uniformly bounded between two positive constants. Thus, for any $p\in \partial K$,
\begin{equation} \label{eq:paraboloid-lower}
c_K \|y\|^2
\le b_0^{(p)}(y)
\le C_K \|y\|^2,\qquad y\in \IR^{d-1},
\end{equation}
where $c_K,C_K>0$ depend only on $K$. Let $\delta=1$ and fix $\lambda$ according to \eqref{eq:osculating}. For each $p\in \partial K$, choose $U^{(p)}$ such that $\proj_{p}(U^{(p)})=B^{(p)}(0,\lambda)$. 

For $p\in \partial K$ and $u\in B^{(p)}(0,2\lambda)$ the second fundamental form is given by 
\[
q_{u}(s)
=q_{u}^{(p)}(s)
=\sum_{i,j}\frac{f_{ij}(u)}{\sqrt{1+\sum_{k}f_k(u)^2}}s_i s_j\qquad s\in \IR^{d-1}.
\]
Since $\kappa_K>0$, the quadratic form $q_{u}^{(p)}$ is positive definite. This holds also in the following uniform sense. 

\begin{lem}\label{lem:pd-uniform}
It holds for all $p\in \partial K$ and all $u\in B^{(p)}(0,2\lambda)$ that
\[
	q_{u}^{(p)}(s)
	\ge \frac{c_K}{2}\|s\|^2,\qquad s\in \IR^{d-1},
\]
where $c_K>0$ is as in \eqref{eq:paraboloid-lower}.
\end{lem}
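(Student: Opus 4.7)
The plan is to extend the positivity of the quadratic form from the base point $u=0$, where the bound follows directly from the setup, to the larger ball $B^{(p)}(0,2\lambda)$ by a continuity argument, and to make sure every constant is independent of $p \in \partial K$, possibly shrinking $\lambda$ in the process.

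First I would handle the case $u=0$. Because $H_K(p)$ is the affine tangent hyperplane to $\partial K$ at $p$, one has $\nabla f^{(p)}(0)=0$, so the normalizing denominator $\sqrt{1+\sum_k f_k^{(p)}(0)^2}$ equals $1$ and
\[
q_0^{(p)}(s) = \sum_{i,j} f_{ij}^{(p)}(0)\, s_i s_j = 2\, b_0^{(p)}(s) \ge 2 c_K \|s\|^2
\]
by \eqref{eq:paraboloid-lower}. For general $u \in B^{(p)}(0,2\lambda)$ I would decompose
\[
q_u^{(p)}(s) - q_0^{(p)}(s) = \sum_{i,j}\bigl(f_{ij}^{(p)}(u) - f_{ij}^{(p)}(0)\bigr) s_i s_j + \left(\frac{1}{\sqrt{1+\|\nabla f^{(p)}(u)\|^2}}-1\right)\sum_{i,j} f_{ij}^{(p)}(u)\, s_i s_j,
\]
and bound both summands by $C\|u\|\,\|s\|^2$ using Taylor's theorem: the $C^3$ regularity of $f^{(p)}$ gives $|f_{ij}^{(p)}(u)-f_{ij}^{(p)}(0)| \le M_1\|u\|$, and together with $\nabla f^{(p)}(0)=0$ one has $\|\nabla f^{(p)}(u)\|\le M_2\|u\|$, while the $|f_{ij}^{(p)}(u)|$ are uniformly bounded. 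Thus $q_u^{(p)}(s) \ge (2 c_K - C\|u\|)\|s\|^2$, and after possibly decreasing $\lambda$ so that $2C\lambda \le \tfrac{3}{2}c_K$ one obtains the claimed lower bound $\tfrac{c_K}{2}\|s\|^2$.

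The main obstacle is making the constants $M_1$, $M_2$, and $\sup|f_{ij}^{(p)}|$ independent of $p$, since a priori each chart $U^{(p)}$ depends on the base point. I would resolve this by a compactness/covering argument: cover $\partial K$ by finitely many patches on which the local graph representation is uniformly controlled, use that $K \in \cK^3_+$ provides a uniform $C^3$-bound for $f^{(p)}$ as $p$ ranges over the compact boundary, and extract a single $C$ from this; then the single shrunken $\lambda$ works simultaneously for every $p \in \partial K$, which is what the lemma asserts.
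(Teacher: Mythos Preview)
Your perturbative argument is essentially correct and would yield the bound $q_u^{(p)}(s)\ge \tfrac{c_K}{2}\|s\|^2$, but only after shrinking $\lambda$: you need $4C\lambda\le \tfrac{3}{2}c_K$ for a constant $C$ built from uniform $C^3$-bounds on the family $\{f^{(p)}\}_{p\in\partial K}$. Since $\lambda$ was already fixed in the setup (via \eqref{eq:osculating} with $\delta=1$) \emph{before} the lemma is stated, this is strictly a modification of the statement rather than a proof of it, though a harmless one for the rest of the paper. Your compactness route to the $p$-uniform derivative bounds is the standard one and works; it can be made precise by noting that $p\mapsto f^{(p)}$ depends continuously on $p$ in $C^3$ (e.g.\ via the inverse Gauss map), so the third derivatives are bounded uniformly on the compact set $\{(p,u):p\in\partial K,\ u\in B^{(p)}(0,2\lambda)\}$.

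The paper takes a genuinely different, geometric route that avoids shrinking $\lambda$ altogether. Rather than perturb from $u=0$, it \emph{changes chart}: for $u\in B^{(p)}(0,2\lambda)$ it sets $x=(u,f^{(p)}(u))\in\partial K$ and relates $q_u^{(p)}(h)$, for small $h$, to the height of a nearby boundary point $x_1$ over the tangent hyperplane $H_K(x)$. The point is that the uniform estimates \eqref{eq:osculating} and \eqref{eq:paraboloid-lower} are available at \emph{every} base point, in particular at $x$ rather than at $p$; applying them there gives $q_u^{(p)}(h)\ge \tfrac{c_K}{2}\|h\|^2$ directly for $\|h\|=\delta$ with some small auxiliary $\delta>0$, and homogeneity in $h$ then removes any dependence on $\delta$. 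This proves the lemma exactly as stated, with the original $\lambda$, and uses only $C^2$-data (uniform continuity of the Hessian) rather than third-derivative bounds. Your argument is shorter and more elementary, at the price of a smaller $\lambda$ and the need for uniform $C^3$ control.
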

\begin{proof}
Let $p\in \partial K$ and	identify $H_K(p)$ with $\IR^{d-1}$. Let $U=U^{(p)}$ and $f=f^{(p)}$ be as above. Let $u\in B(0,2\lambda)=B^{(p)}(0,2\lambda)$ and $x=(u,f(u))$.  For $h\in \IR^{d-1}$ with $u+h\in B(0,3\lambda)$, define
\[
y=\Big(u+h,f(u)+\sum_{k}f_k(u)h_k \Big)\in H_K(x).
\]
Let $x_1=(u+h,f(u+h))\in \partial K$. By Taylor's theorem we find $\xi\in [0,1]$ such that
\begin{equation} \label{eq:dist-xi}
\|x_1-y\|
=f(u+h)-f(u)-\sum_{k}f_k(u)h_k 
=\frac{1}{2}\sum_{i,j}f_{ij}(u+\xi h)h_ih_j.
\end{equation}

By \eqref{eq:osculating}, the angle $\beta\in (0,\frac{\pi}{2})$ enclosed between $H_K(x)$ and $H_K(p)=\IR^{d-1}$ satisfies 
\begin{equation} \label{eq:cos-lower}
\frac{1}{\sqrt{1+\sum_{k}f_k(u)^2}}
=\cos\beta
\ge c_0
\end{equation}
for some absolute constant $c_0>0$. Thus,
\begin{equation} \label{eq:y-x}
\|x-y\|
\le \frac{\|h\|}{\cos \beta}
\le c_0^{-1}\|h\|
\end{equation}
and it follows from \eqref{eq:dist-xi} that $y_1=\proj_x(x_1)\in H_K(x)$ satisfies
\begin{align*}
\|y_1-y\|
\le \|x_1-y\|
\le (d-1)\max_{i,j}\sup_{z\in B(0,3\lambda)} |f_{ij}(z)|\cdot \|h\|^2.
\end{align*}
Using \eqref{eq:y-x} and continuity of the second derivatives of $f$ on $B(0,3\lambda)$, we find $\delta>0$ such that
\[
\|x-y_1\|
\le \|x-y\|+\|y-y_1\|
< \lambda,\qquad \text{whenever }\|h\|<\delta. 
\]
Thus, for $h\in \IR^{d-1}$ with $\|h\|\le \delta$ it holds that $x_1\in U^{(x)}$, which is parametrized via $f^{(x)}$ and $H_K(x)$. Moreover, the previous assumption $u+h\in B(0,3\lambda)$ is satisfied. By uniform continuity of the second derivatives of $f$ on $B(0,3\lambda)$ we can additionally assume that $\delta$ is small enough such that 
\begin{equation} \label{eq:f2-uniform}
f_{ij}(u+\xi h)\le 2\,f_{ij}(u),\qquad \|h\|\le \delta.
\end{equation}

Since $y_1$ is the orthogonal projection of $x_1$ onto $H_K(x)$, combining \eqref{eq:paraboloid-lower}, \eqref{eq:dist-xi} and \eqref{eq:f2-uniform}, we deduce that, for any $h\in \IR^{d-1}$ with $\|h\|=\delta$,
\[
q^{(p)}_u(h)
\ge \frac{1}{2}\sum_{i,j}\frac{f_{ij}(u+\xi h)}{\sqrt{1+\sum_k f_k(u)^2}}h_i h_j
= \|x_1-y_1\|
\ge \frac{c_K}{2} \|x-y_1\|^2 
\ge \frac{c_K}{2} \|h\|^2,
\]
where $c_K>0$ is as in \eqref{eq:paraboloid-lower}. By homogenity, this completes the proof.
\end{proof}

The assumption $K\in \cK^3_+$ implies that the Riemannian metric induced by the second fundamental form is of class $C^1$ on $\partial K$. In this metric, the length of a continuously differentiable curve $k\colon [0,1]\to U$ connecting two points $k(0)=x$ and $k(1)=y$ in $U=U^{(p)}$ for some $p\in \partial K$ is given by 
\[
\ell(k)
:= \int_0^1 \sqrt{q_{u(\tau)}(u'(\tau))} \dd \tau,
\]
where $u=\proj_p(k)$.  If $x$ and $y$ do not both belong to $U$, we dissect the curve suitably and add. The geodesic distance $\gamma(x,y)$ between $x,y\in \partial K$ is given by the infimum of lengths over all continuously differentiable curves in $\partial K$ connecting $x$ and $y$. 

If $A\subset U$ is measurable with $A'=\proj_p(A)$, then its Riemannian volume is given by
\[
\vol_{\gamma}(A)
=\int_{A'}\sqrt{\det q_{u}}\dd u
=\int_{A}\sqrt{\kappa_K(x)}\dd \cH^{d-1}(x),
\]
and, in general, for $A\subset \partial K$ we dissect and add. Thus, the Riemannian volume $\vol_{\gamma}$ has density $h_{\kappa}$ as in \eqref{eq:h-opt} with respect to $\cH^{d-1}$ on $\partial K$. Let
\[
B_{\gamma}(x,r)
=\{y\in \partial K\colon \gamma(x,y)\le r\}
\]
be the (closed) geodesic ball with center $x\in \partial K$ and radius $r>0$. 

\begin{lem}\label{lem:small-ball}
	For every $p\in \partial K$, it holds that $B_{\gamma}(p,\varrho_0)\subset U^{(p)}$ with
	$\varrho_0=\lambda\sqrt{c_K}/5$, where $c_{K}$ is as in \eqref{eq:paraboloid-lower}.
\end{lem}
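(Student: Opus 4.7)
The plan is to exploit the uniform coercivity of the second fundamental form established in Lemma~\ref{lem:pd-uniform}: any rectifiable curve in $\partial K$ starting at $p$ can only exit $U^{(p)}$ after its projection onto $H_K(p)$ traverses a Euclidean distance of $\lambda$, and Lemma~\ref{lem:pd-uniform} lower-bounds the Riemannian length by $\sqrt{c_K/2}$ times this Euclidean length. The threshold $\varrho_0 = \lambda \sqrt{c_K}/5$ is comfortably below $\lambda \sqrt{c_K/2}$, so any short geodesic stays inside $U^{(p)}$.

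More precisely, I would argue by contradiction: fix $p \in \partial K$ and $y \in B_\gamma(p,\varrho_0)$, and suppose $y \notin U^{(p)}$. Given $\varepsilon>0$, choose a piecewise $C^1$ curve $k\colon [0,1]\to \partial K$ with $k(0)=p$, $k(1)=y$, and $\ell(k)\le \gamma(p,y)+\varepsilon$. Since $U^{(p)}$ is a neighborhood of $p$ and $y$ lies outside $U^{(p)}$, there is a well-defined first time $T\in (0,1]$ at which $k$ leaves $U^{(p)}$; by the chosen parametrization of $U^{(p)}$ as a graph over $B^{(p)}(0,\lambda)$, the projection $u(\tau)=\proj_p(k(\tau))$ is defined on $[0,T]$ and satisfies $u(0)=0$, $\|u(T)\|=\lambda$, and $u([0,T])\subset B^{(p)}(0,\lambda)\subset B^{(p)}(0,2\lambda)$.

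Now Lemma~\ref{lem:pd-uniform} applies pointwise on $[0,T]$ and gives $\sqrt{q_{u(\tau)}^{(p)}(u'(\tau))}\ge \sqrt{c_K/2}\,\|u'(\tau)\|$. Integrating,
\[
\ell(k)\ge \ell(k|_{[0,T]})=\int_0^T \sqrt{q_{u(\tau)}^{(p)}(u'(\tau))}\,\dd\tau \ge \sqrt{c_K/2}\int_0^T \|u'(\tau)\|\,\dd\tau \ge \sqrt{c_K/2}\,\|u(T)-u(0)\|=\lambda\sqrt{c_K/2}.
\]
Letting $\varepsilon\downarrow 0$ yields $\gamma(p,y)\ge \lambda\sqrt{c_K/2}=\lambda\sqrt{c_K}/\sqrt{2}$, contradicting $\gamma(p,y)\le \varrho_0=\lambda\sqrt{c_K}/5 < \lambda\sqrt{c_K}/\sqrt{2}$. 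Hence $y\in U^{(p)}$, as desired.

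The only mildly delicate point is the existence of a clean first exit time $T$ together with the definedness of $\proj_p \circ k$ on $[0,T]$; this is a routine continuity/connectedness consideration exploiting that $U^{(p)}$ is chosen as a graph over a ball, and can be made fully rigorous by approximating $k$ with curves whose exits occur through the lateral boundary $\|u\|=\lambda$. The quantitative heart of the lemma is entirely contained in Lemma~\ref{lem:pd-uniform}, and the factor of $5$ in the definition of $\varrho_0$ is a safety margin rather than a tight constant.
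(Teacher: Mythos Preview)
Your argument is correct and essentially identical to the paper's: both use Lemma~\ref{lem:pd-uniform} to lower-bound the Riemannian length of a short curve from $p$ by $\sqrt{c_K/2}$ times the Euclidean arc length of its projection onto $H_K(p)$, and then compare this with the radius $\lambda$ of the parametrizing ball. The only cosmetic differences are that you phrase it as a contradiction via a first exit time and a near-geodesic curve with $\ell(k)\le\gamma(p,y)+\varepsilon$, whereas the paper argues directly with any curve of length $\ell(k)\le 2\varrho_0$ and shows its projection stays in $B(0,\tfrac{2}{3}\lambda)$, concluding by a continuation argument; the quantitative core is the same.
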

\begin{proof}
Let $p\in \partial K$ and let $U=U^{(p)}$ be as above with $\proj_{p}(U)=B(0,\lambda)$, where we identify $H_K(p)$ with $\IR^{d-1}$. Let $x\in \partial K$ with $\gamma(x,p)\le \varrho_0$ and connect $p$ with $x$ by a continuously differentiable curve $k$ in $\partial K$ of length $\ell(k)\le 2\varrho_0$ with $k(0)=p$ and $k(1)=x$.  Let $\alpha\in [0,1]$ be such that $k([0,\alpha])\subset U$. Then the orthogonal projection $u=\proj_p(k)$ satisfies $u([0,\alpha])\subset B(0,\lambda)$. It holds by Lemma~\ref{lem:pd-uniform} that 
\begin{equation} \label{eq:curve-lower}
\ell(k)
\ge \int_0^{\alpha}\sqrt{q_{u(\tau)}(u'(\tau))}\dd\tau
\ge \sqrt{\frac{c_K}{2}}\int_0^{\alpha}\|u'(\tau)\|\dd\tau.
\end{equation}
The integral on the right-hand side of \eqref{eq:curve-lower} equals the arc length of the curve $u\colon [0,\alpha]\to B(0,\lambda)$.  In particular, 
\[
\ell(k)
\ge  \sqrt{\frac{c_K}{2}}\|p-u(\tau)\|,\qquad \tau\le \alpha.
\]
Then $\|p-u(\tau)\|\le \frac{2\sqrt{2}\varrho_0}{\sqrt{c_K}}< \frac{2}{3}\lambda$ for $\tau\le \alpha$ and the curve $u$ remains within $B(0,\frac{2}{3}\lambda)$. Since $u$ was the restriction of $\proj_p(k)$ to $B(0,\lambda)$, the curve $k$ remains within $U$ and thus $x\in U$. This proves that $B_{\gamma}(p,\varrho_0)\subset U$.
\end{proof}

Write $\dist(y,A)=\inf_{x\in A}\|x-y\|$ for the Euclidean distance of a point $y$ to a set $A$. We have the following local version of Theorem~\ref{thm:hd-covering}. 

\begin{lem}\label{lem:hd-covering}
There exist $\varrho>0$ and $C>0$ such that, for any $x,y\in \partial K$, if $\gamma(x,y)\le \varrho$, then the Euclidean distance between $y$ and the support hyperplane $H_K(x)$ of $\partial K$ at $x$ satisfies
\[
\Big|\dist\big(y,H_K(x)\big)-\frac{\gamma(x,y)^2}{2}\Big|\le C\gamma(x,y)^3.
\]
\end{lem}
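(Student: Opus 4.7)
The plan is to parameterize $\partial K$ near $x$ as the graph of $f=f^{(x)}$ over $H_K(x)\cong \IR^{d-1}$, set $u=\proj_x(y)$, and reduce the claim to two separate Taylor expansions in $\|u\|$: one for $\dist(y,H_K(x))=f(u)$ and one for $\gamma(x,y)^2$ in terms of the quadratic form $b_0^{(x)}$. The key algebraic fact is $\nabla f(0)=0$, which forces $q_0^{(x)}(s)=\sum_{i,j}f_{ij}(0)s_is_j=2b_0^{(x)}(s)$. By Lemma~\ref{lem:small-ball}, once $\varrho$ is small enough, $y\in U^{(x)}$; and Lemma~\ref{lem:pd-uniform} together with \eqref{eq:paraboloid-lower} yields that $\|u\|$ and $\gamma(x,y)$ are comparable with constants depending only on $K$, so I can freely convert $O(\|u\|^k)$ into $O(\gamma(x,y)^k)$. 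Uniformity in $x\in \partial K$ is inherited throughout from the uniform bounds established in the setup, together with compactness.

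Since $f\in C^3$ with $f(0)=0$ and $\nabla f(0)=0$ and its third derivatives are uniformly bounded on $B^{(x)}(0,2\lambda)$, Taylor's theorem gives $\dist(y,H_K(x))=f(u)=b_0^{(x)}(u)+O(\|u\|^3)$, so the whole lemma reduces to $\gamma(x,y)^2=2b_0^{(x)}(u)+O(\|u\|^3)$. For the upper bound I would test the infimum defining $\gamma(x,y)$ with the segment $\tau\mapsto(\tau u, f(\tau u))$ on $\partial K$. The Lipschitz dependence of $q^{(x)}$ on its base point (a direct consequence of $K\in \cK^3_+$) yields $q_{\tau u}^{(x)}(u)=q_0^{(x)}(u)+O(\|u\|^3)$, hence $\sqrt{q_{\tau u}^{(x)}(u)}=\sqrt{q_0^{(x)}(u)}+O(\|u\|^2)$ using \eqref{eq:paraboloid-lower}; integrating in $\tau$ and squaring yields $\gamma(x,y)^2\le 2b_0^{(x)}(u)+O(\|u\|^3)$.

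The matching lower bound is the main obstacle, since a competing curve $k$ need not be a graph over a segment and could a priori wander out of $U^{(x)}$. The remedy is to restrict attention to curves with $\ell(k)\le 2\varrho_0$; rerunning the confinement argument from the proof of Lemma~\ref{lem:small-ball} along such a $k$ keeps it inside $U^{(x)}$, so $u(\tau)=\proj_x(k(\tau))$ is well defined with $u(0)=0$ and $u(1)=u$. The Lipschitz estimate then gives $\sqrt{q_{u(\tau)}^{(x)}(u'(\tau))}\ge \sqrt{q_0^{(x)}(u'(\tau))}-C\|u(\tau)\|\|u'(\tau)\|$. Integrating exploits two facts: first, $\sqrt{q_0^{(x)}}$ is a norm on $\IR^{d-1}$, so $\int_0^1\sqrt{q_0^{(x)}(u'(\tau))}\,\dd\tau\ge\sqrt{q_0^{(x)}(u)}$; second, by Lemma~\ref{lem:pd-uniform} the Euclidean arc length of $u$ and hence $\sup_\tau\|u(\tau)\|$ are both $O(\gamma(x,y))$, making the error integral $O(\gamma(x,y)^2)$. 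Rearranging yields $\sqrt{q_0^{(x)}(u)}\le \gamma(x,y)+O(\gamma(x,y)^2)$, and squaring gives $\gamma(x,y)^2\ge 2b_0^{(x)}(u)-O(\gamma(x,y)^3)$, as needed.
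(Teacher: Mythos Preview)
Your argument is correct and follows a genuinely different route from the paper's. The paper works in one of finitely many fixed charts $U^{(p_\ell)}$ covering $\partial K$, and its central device is an auxiliary ``frozen'' distance $\bar{\gamma}(x,y)$ defined via the matrix $G(x'+\xi(y'-x'),x')$, where $\xi$ is chosen from the second-order Taylor expansion of $f^{(p_\ell)}$ so that $\bar{\gamma}(x,y)^2=2\,\dist(y,H_K(x))$ holds \emph{exactly}; it then bounds $|\gamma-\bar{\gamma}|$ by $C\|x'-y'\|^2$ using the same Lipschitz estimate you use. You instead center the chart at $x$ and exploit $\nabla f^{(x)}(0)=0$, which reduces the problem to the single approximation $\gamma(x,y)^2=q_0^{(x)}(u)+O(\|u\|^3)$, handled by a direct upper/lower bound. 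Your approach is more elementary and transparent; the paper's choice of a finite cover, by contrast, makes the uniform third-derivative bounds explicit rather than appealing to compactness.

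One small point of phrasing: for the lower bound you ``restrict attention to curves with $\ell(k)\le 2\varrho_0$'' to run the confinement argument, but then assert $\sup_\tau\|u(\tau)\|$ and $\len(u)$ are $O(\gamma(x,y))$. Lemma~\ref{lem:pd-uniform} only gives $\len(u)\le C\,\ell(k)$, so this requires the stronger restriction $\ell(k)\le 2\gamma(x,y)$ (which of course suffices for the infimum and still yields confinement since $2\gamma(x,y)\le 2\varrho\le 2\varrho_0$). This is a minor wording issue, not a gap in the idea.
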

\begin{proof}
Let $\varrho_0$ be as in Lemma~\ref{lem:small-ball}. Fix $p_1,\dots,p_m\in \partial K$ such that the geodesic balls $B_{\gamma}(p_{1},\varrho_0),\dots,B_{\gamma}(p_{m},\varrho_0)$ cover $\partial K$. This is possible by compactness of $\partial K$ and the fact that the topology induced by $\gamma$ coincides with the one inherited by the ambient space $\IR^d$. By Lemma~\ref{lem:small-ball}, the neighborhoods $U^{(p_1)},\dots,U^{(p_{m})}$ cover $\partial K$. Moreover, for any $\ell=1,\dots,m$, the derivatives of $f^{(p_{\ell})}$ up to third order are uniformly bounded on $B^{(p_{\ell})}(0,2\lambda)$ by a constant only depending on $K$ (and $p_1,\dots,p_m$).

	Let $x,y\in \partial  K$ with $\gamma(x,y)\le \varrho$, where $\varrho\le \frac{\varrho_0}{2}$ will be chosen later. In particular, both $x$ and $y$ belong to some $U=U^{(p_\ell)}$ and the orthogonal projections $x'=\proj_{p_{\ell}}(x)$ and $y'=\proj_{p_{\ell}}(y)$ onto $H_K(p_{\ell})=\IR^{d-1}$ are contained in $B(0,\lambda)=B^{(p_{\ell})}(0,\lambda)$. The function $f=f^{(p_{\ell})}$ represents $U$ in the form $(u,f(u))$ where $u\in B(0,\lambda)$.  Since 
\begin{equation} \label{eq:far-boundary}
	\gamma(x,y)\le \frac{\varrho_0}{2} \le \inf_{z\in \partial K\setminus U}\gamma(x,z),
\end{equation}
a curve connecting $x$ and $y$ and having minimal length must be contained in $U$. Let 
\[
\curv=\{u\in C^1([0,1];B(0,2\lambda))\colon u(0)=x',u(1)=y'\}
\]
be the set of continuously differentiable curves $u\colon [0,1]\to B(0,2\lambda)$ connecting $x'$ and $y'$. Moreover, for $z_1,z_2\in B(0,2\lambda)$, define a $d-1$ by $d-1$ matrix $G(z_1,z_2)$ with entries
\[
G_{ij}(z_1,z_2)
=\frac{f_{ij}(z_1)}{\sqrt{1+\sum_{k}f_k(z_2)^2}} 
\]
such that, by \eqref{eq:far-boundary}, the geodesic distance between $x$ and $y$ satisfies 
\begin{equation} \label{eq:geo}
\gamma(x,y)
=\inf_{u\in  \curv}L(u),
\qquad \text{} L(u):=\int_0^1 \sqrt{\langle G(u(\tau),u(\tau))\cdot u'(\tau),u'(\tau)\rangle} \dd \tau,
\end{equation}
where $\langle \cdot,\cdot\rangle$ denotes the standard Euclidean inner product.  By Lemma~\ref{lem:pd-uniform} and uniform boundedness of the first and second derivatives of $f$, it holds that  
\begin{equation} \label{eq:G-bounds}
c_G\|s\|^2\le \langle G(z_1,z_2)s,s\rangle \le C_G\|s\|^2,\qquad z_1,z_2\in B(0,2\lambda), s\in \IR^{d-1},
\end{equation}
for some $c_G,C_G>0$ depending only on $K$ (and $p_1,\dots,p_m$). Thus, for every $u\in \curv$,  
\[
\sqrt{c_G}\len(u) \le 
L(u)\le \sqrt{C_G}\len(u),\qquad \text{ where }
\len(u)
:= \int_0^1 \|u'(\tau)\|\dd \tau
\]
is the arc length of the curve $u$. In particular, this holds for the straight line segment connecting $x'$ and $y'$ which is parametrized by $\bar{u}\in \curv$ with
\begin{equation} \label{eq:bar-u}
\bar{u}(\tau):=x'+\tau(y'-x'),\qquad \tau\in [0,1],
\end{equation}
and has minimal arc length 
\begin{equation*} 
\len(\bar{u})
=\|x'-y'\|
=\inf_{u\in \curv}\len(u).
\end{equation*}

By standard methods in the calculus of variation, see e.g.~\cite[Chapter 7]{Dac25},  the infimum in \eqref{eq:geo} is attained by some $u^*\in \curv$.  The arc length of the curve $u^*$ satisfies
\begin{equation} \label{eq:arc-min}
	\sqrt{c_G}\|x'-y'\|
	\le \sqrt{c_G}\len(u^*)
\le \gamma(x,y)
\le L(\bar{u})
\le \sqrt{C_G}\|x'-y'\|
\end{equation}
and thus the curve $u^*$ is fully contained in the Euclidean ball
\begin{equation} \label{eq:ball-x-y}
	B:=B\Big(x',\sqrt{\frac{C_G}{c_G}}\|x'-y'\|\Big).
\end{equation}
Choose $\varrho\in (0,\frac{\varrho_0}{2})$ such that $\sqrt{\frac{C_G}{c_G}}\varrho<\lambda$. Then $B\subset B(0,2\lambda)$. 

The distance $\dist(y,H_K(x))$ of $y$ to the supporting hyperplane $H_K(x)$ of $\partial K$ at $x$ is given by Taylor's theorem as
\begin{align} \label{eq:dH-F}
\dist(y,H_K(x))
&=\frac{f(y')-f(x')-\sum_{k}f_k(x')(y'_k-x'_k)}{\sqrt{1+\sum_{k}f_k(x')^2}}\notag \\
&= \frac{1}{2}\sum_{i,j}\frac{f_{ij}(x'+\xi(y'-x'))}{\sqrt{1+\sum_{k}f_k(x')^2}}(y'_i-x'_i)(y'_j-x'_j),
\end{align}
for some $\xi\in [0,1]$. For $\xi$ as in \eqref{eq:dH-F}, define 
\begin{equation} \label{eq:geo-bar}
\bar{\gamma}(x,y)
=\inf_{u\in \curv} \bar{L}(u),
\qquad
\bar{L}(u):=\int_0^1 \sqrt{\langle G(x'+\xi(y'-x'),x')\cdot u'(\tau),u'(\tau)\rangle} \dd \tau.
\end{equation}
The infimum in the definition of $\bar{\gamma}(x,y)$ is attained by some $u\in C$ with constant derivative, that is, given by $\bar{u}$ as in \eqref{eq:bar-u}, see e.g.~\cite[Chapter 7]{Dac25}. This implies 
\begin{align}\label{eq:gamma-bar}
\bar{\gamma}(x,y)
=\bar{L}(\bar{u})
&=\sqrt{\langle G(x'+\xi(y'-x'),x')\cdot (y'-x'),y'-x'\rangle}\notag\\
&=\sqrt{2\,\dist(y,H_K(x))}.	
\end{align}
It follows from \eqref{eq:G-bounds} that
\begin{equation} \label{eq:gamma-bar-bound}
	\sqrt{c_G}\|x'-y'\|
	\le \bar{\gamma}(x,y) 
	\le \sqrt{C_G}\|x'-y'\|.
\end{equation}

It remains to estimate the difference between $\bar{\gamma}(x,y)$ and $\gamma(x,y)$. By optimality of $u^*$ for \eqref{eq:geo} and $\bar{u}$ for \eqref{eq:geo-bar}, it holds that
\[
\gamma(x,y)
=L(u^*)
\le L(\bar{u}) 
\le \bar{\gamma}(x,y) + |L(\bar{u})-\bar{L}(\bar{u})|,
\]
and analogously with $\gamma, L$ and $u^*$ exchanged with $\bar{\gamma},\bar{L}$ and $\bar{u}$. This gives
\begin{equation} \label{eq:L-bounds}
|\gamma(x,y)-\bar{\gamma}(x,y)|
\le \max\big\{|L(\bar{u})-\bar{L}(\bar{u})|,|L(u^*)-\bar{L}(u^*)|\big\}.
\end{equation}
Let $u\in \{u^*,\bar{u}\}$. Then $u$ is contained in the ball $B$ as in \eqref{eq:ball-x-y}. Note that, for any $z_1,\dots,z_4\in B(0,2\lambda)$ and $s\in \IR^{d-1}$,
\begin{align}\label{eq:sqrt-Gs}
	\big|\sqrt{\langle G(z_1,z_2)\cdot s,s\rangle} -\sqrt{\langle G(z_3,z_4)\cdot s,s\rangle}\big|
	&\le \frac{|\langle (G(z_1,z_2)-G(z_3,z_4))\cdot s,s\rangle|}{\sqrt{\langle G(z_1,z_2)\cdot s,s\rangle} +\sqrt{\langle G(z_1,z_2)\cdot s,s\rangle}}\notag\\
	&\le \frac{\|G(z_1,z_2)-G(z_3,z_4)\|_{2\to 2}}{2\sqrt{c_G}}\|s\|\notag\\
	&\le C\, \max_{i,j}|G_{ij}(z_1,z_2)-G_{ij}(z_3,z_4)|\,\|s\|,
\end{align}
where $\|\cdot\|_{2\to 2}$ denotes the spectral norm. Here and in the remainder of the proof, write $C$ for a positive constant depending only on $K$ whose value may change from line to line.  Using \eqref{eq:sqrt-Gs}, repeated application of Taylor's theorem and the uniform bounds on the derivatives of $f$ gives
\begin{align*}
|L(u)-\bar{L}(u)|
&\le C \max_{i,j}\sup_{z\in B}|G_{ij}(z,z)-G_{ij}(x'+\xi(y'-x'),x')|\,\len(u)\notag\\
&\le C\|x'-y'\|\,\len(u).
\end{align*}
Thus, it follows from \eqref{eq:arc-min} and \eqref{eq:L-bounds} that
\begin{equation}\label{eq:gammas}
	|\gamma(x,y)-\bar{\gamma}(x,y)|
	\le C \|x'-y'\|^2.
\end{equation}
Recall that $\bar{\gamma}(x,y)=\sqrt{2\,\dist(y,H_K(x))}$. Thus, by \eqref{eq:gamma-bar-bound} and \eqref{eq:gammas},
\begin{align*}
\Big|\frac{\gamma(x,y)^2}{2}-\dist(y,H_K(x))\Big|
&\le |\gamma(x,y)+\bar{\gamma}(x,y)||\gamma(x,y)-\bar{\gamma}(x,y)|\\
&\le C \gamma(x,y)^3.
\end{align*}
This completes the proof of Lemma~\ref{lem:hd-covering}.
\end{proof}

\begin{proof}[Proof of Theorem~\ref{thm:hd-covering}]
	Choose $\varrho$ as in Lemma~\ref{lem:hd-covering} and let $P$ be a finite point set on $\partial K$ with $\varrho(P)\le \varrho$. If the inscribed polytope $\conv(P)$ has empty interior, the statement is trivial. Thus, assume that $\conv(P)$ has nonempty interior.  For every facet $F$ of $\conv(P)$, the support hyperplane of $K$ with the same outward pointing normal vector as $F$ is given by $H_K(z_F)$ for some $z_F\in \partial K$. Let $h_F$ be the Euclidean distance between the parallel affine hyperplanes $H_K(z_F)$ and $\aff F$ (affine hull of $F$). The intersection of $\partial K$ with the closed half-space bounded by $\aff F$ which does not intersect the interior of $\conv(P)$ defines a cap
\[
C^{\partial K}(z_F,h_F)
=\{y\in \partial K\colon \dist(y,H_K(z_F))\le h_F\}
\]
of $\partial K$ with center $z_F$ and height $h_F$. Since $z\mapsto \min_{x\in P}\gamma(z,x)$ is continuous on $C^{\partial K}(z_F,h_F)$, there is a point $x_F\in C^{\partial K}(z_F,h_F)$ with 
\[
\varrho_F
:=\min_{x\in P}\gamma(x_F,x)
=\max_{z\in C^{\partial K}(z_F,h_F)}\min_{x\in P}\gamma(z,x).
\]
Since the caps $\{C^{\partial K}(z_F,h_F)\colon F \text{ is a facet of }\conv(P)\}$ cover $\partial K$, it holds that
\begin{equation} \label{eq:max-F}
\varrho(P)
=\max_F \varrho_F \qquad \text{and}\qquad 
\delta_H(\conv(P),K)
=\max_F h_F.
\end{equation}

Let $F$ be a facet of $\conv(P)$. The geodesic ball $B_{\gamma}(x_F,\varrho_F)\subset \partial K$ has maximal radius among all geodesic balls which are centered in $C^{\partial K}(z_F,h_F)$ and do not contain any point of $P$ in their interior. Thus, the ball $B_{\gamma}(z_F,\varrho_F)$ intersects $P$ and in particular $\aff F$. Let $y_1\in B_{\gamma}(z_F,\varrho_F)\cap \aff F$.
By \eqref{eq:max-F},
\[
\gamma(y_1,z_F)\le \varrho_F\le \varrho(P)\le \varrho
\]
and thus, by Lemma~\ref{lem:hd-covering}, 
\begin{equation} \label{eq:rho-h-1}
h_F
=\dist(y_1,H_K(z_F))
\le \frac{\gamma(y_1,z_F)^2}{2}+C\gamma(y_1,z_F)^3
\le \frac{\varrho_F^2}{2}+C\varrho_F^3.
\end{equation}
In order to bound $h_F$ from below in terms of $\varrho_F$, consider the cap
\[
C^{\partial K}(x_F,h_F)
=\{y\in \partial K\colon \dist(y,H_K(x_F))\le h_F\}
\]
which is cut off by a translated copy of the support hyperplane $H_K(x_F)$ of $\partial K$ at $x_F$.
The cap $C^{\partial K}(x_F,h_F)$ intersects $P$ because $C^{\partial K}(z_F,h_F)$ has maximal height among all caps which are centered in $C^{\partial K}(z_F,h_F)$ and do not contain any point of $P$ in their interior. Since $B_{\gamma}(x_F,\varrho_F)$ intersects $P$ only in its boundary, it cannot contain $C^{\partial K}(x_F,h_F)$ in its interior. Thus, we find $y_2\in C^{\partial K}(x_F,h_F)$ with $\gamma(y_2,x_F)=\varrho_F$. Again by Lemma~\ref{lem:hd-covering},
\begin{equation} \label{eq:rho-h-2}
\frac{\varrho_F^2}{2}-C\varrho_F^3
=\frac{\gamma(y_2,x_F)^2}{2}-C\gamma(y_2,x_F)^3
\le \dist(y_2,H_K(x_F))
\le h_F.
\end{equation}
Combining \eqref{eq:max-F}, \eqref{eq:rho-h-1} and \eqref{eq:rho-h-2} completes the proof of Theorem~\ref{thm:hd-covering}.
\end{proof}

We require the following volume asymptotics for small geodesic balls on $\partial K$.  In case of infinite smoothness, these are well-known and more precise asymptotics are available, see e.g.~\cite[Theorem~3.1]{Gra74} or \cite[Lem.~2.3]{CCN21}. 

\begin{lem}\label{lem:vol-geo}
Let $K\in \cK^3_+$. There are $r_0>0$ and $C>0$ such that, for all $x\in \partial K$ and $r\le r_0$,
\begin{equation*} 
\big|\vol_{\gamma}(B_{\gamma}(x,r))-\kappa_{d-1} r^{d-1}\big|
\le Cr^d.
\end{equation*}
\end{lem}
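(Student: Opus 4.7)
The plan is to fix $x \in \partial K$, work in the projection chart at $x$, and apply a linear normalization so that the Riemannian metric becomes the identity at the origin. By Lemma~\ref{lem:small-ball}, for $r \le \varrho_0$ the ball $B_\gamma(x,r)$ lies inside $U^{(x)}$, so
\[
\vol_\gamma(B_\gamma(x,r)) = \int_{\proj_x(B_\gamma(x,r))} \sqrt{\det Q_u}\, du,
\]
where $Q_u$ denotes the matrix of $q_u^{(x)}$. I would next apply the linear change of variables $v = Mu$ with $M = (\nabla^2 f^{(x)}(0))^{1/2}$; since $Q_0 = \nabla^2 f^{(x)}(0)$ (because $f_k^{(x)}(0) = 0$) has determinant $\kappa_K(x)$, the Jacobian cancels $\sqrt{\det Q_0}$ in the volume form exactly, giving
\[
\vol_\gamma(B_\gamma(x,r)) = \int_{\tilde{B}(r)} \sqrt{\det \tilde{Q}_v}\, dv,
\]
where $\tilde{B}(r) := M(\proj_x(B_\gamma(x,r)))$ and $\tilde{Q}_v := M^{-1} Q_{M^{-1}v} M^{-1}$ satisfies $\tilde{Q}_0 = I$. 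Because $f^{(x)} \in C^3$, the map $v \mapsto \tilde{Q}_v$ is $C^1$; a finite-atlas argument as in the proof of Lemma~\ref{lem:hd-covering}, combined with the uniform eigenvalue bounds on $M$ provided by \eqref{eq:paraboloid-lower}, yields $\|\tilde{Q}_v - I\| \le C\|v\|$ on a uniform neighborhood of the origin with $C$ depending only on $K$.

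The crux of the argument is the geometric sandwich
\[
\{v : \|v\| \le r(1-Cr)\} \subset \tilde{B}(r) \subset \{v : \|v\| \le r(1+Cr)\}
\]
for small $r$, with $C$ possibly enlarged. For the inner inclusion I would test the straight-line path $t \mapsto tv$: its $\tilde{Q}$-length is at most $\|v\|(1 + C\|v\|/4)$, which is $\le r$ whenever $\|v\| \le r(1-Cr)$, so $\tilde{\gamma}(0,v) \le r$. For the outer inclusion I would invoke Lemma~\ref{lem:pd-uniform} (transported through $M$) to obtain a uniform lower bound $\tilde{Q}_v \ge \mu I$ on a uniform Euclidean neighborhood with $\mu > 0$. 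This forces every optimal path from $0$ to any $v$ with $\tilde{\gamma}(0,v) \le r$ to stay inside the Euclidean ball of radius $r/\sqrt{\mu}$; on that smaller ball the refined bound $\tilde{Q}_v \ge (1 - Cr/\sqrt{\mu})I$ applies, and the resulting length lower bound $\tilde{\gamma}(0,v) \ge (1 - Cr/(2\sqrt{\mu}))\|v\|$ forces $\|v\| \le r(1 + C'r)$.

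Combining $\sqrt{\det \tilde{Q}_v} = 1 + O(\|v\|)$ with the sandwich gives $\vol(\tilde{B}(r)) = \kappa_{d-1}r^{d-1} + O(r^d)$ and therefore
\[
\vol_\gamma(B_\gamma(x,r)) = \kappa_{d-1}r^{d-1} + O(r^d).
\]
The hard part will be keeping careful uniform control: the $C^1$ modulus of continuity of $Q^{(x)}$ and the eigenvalue bounds on $M, M^{-1}$ must all be controlled by constants depending only on $K$, which is achieved by covering $\partial K$ with the finite atlas $U^{(p_1)}, \dots, U^{(p_m)}$ from Lemma~\ref{lem:hd-covering} and exploiting compactness together with the hypothesis $K \in \cK^3_+$.
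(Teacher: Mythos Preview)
Your proposal is correct and follows essentially the same route as the paper: freeze the Riemannian metric at the center, sandwich the projected geodesic ball between two concentric copies of the frozen ball (the paper phrases these as ellipsoids $\mathcal{E}(x',r\pm \bar{C}r^2)$ in one of the finitely many atlas charts $U^{(p_\ell)}$, whereas you first linearly normalize so they become Euclidean balls in the chart $U^{(x)}$), and combine with the $O(r)$ variation of $\sqrt{\det q_u}$. The only bookkeeping difference is that working in $U^{(x)}$ for every $x$ forces you to control the $C^1$ modulus of $Q^{(x)}$ uniformly in $x$, which the paper sidesteps by staying in a fixed atlas chart; you correctly flag this, and the finite-cover/compactness argument you indicate does the job.
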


\begin{proof}
	Let $\varrho\in (0,\frac{\varrho_0}{2})$ and $B(p_1,\varrho_0),\dots,B(p_m,\varrho_0)$ be as in (the proof of) Lemma~\ref{lem:hd-covering}. Let $x\in \partial K$ and $r\le \frac{\varrho}{2}$. Then $B_{\gamma}(x,2r)$ belongs to a region $U=U^{(p_{\ell})}$ with parametrization $f=f^{(p_{\ell})}$ and $\proj(U)=\proj_{p_{\ell}}(U)= B(0,\lambda)$. Let $y\in B_{\gamma}(x,2r)$ and set $x'=\proj(x)$ and $y'=\proj(y)$. Define 
\begin{equation} \label{eq:geo-bar-0}
\bar{\gamma}(x,y)
=\inf_{u\in \curv} \bar{L}(u),
\qquad
\bar{L}(u):=\int_0^1 \sqrt{\langle G(x',x')\cdot u'(\tau),u'(\tau)\rangle} \dd \tau,
\end{equation}
where $\curv$ and $G$ are as in the proof of Lemma~\ref{lem:hd-covering} (and $\bar{\gamma}$ is as if $\xi=0$). It follows from  \eqref{eq:gamma-bar} that
\[
\bar{\gamma}(x,y)
=\sqrt{\langle G(x',x')\cdot (y'-x'),y'-x'\rangle}
\]
and similar to \eqref{eq:gammas} we have
\begin{equation} \label{eq:ell-app}
	|\gamma(x,y)-\bar{\gamma}(x,y)|\le \bar{C} r^2,
\end{equation}
where $\bar{C}>0$ depends only on $K$. Let $r_0<\min\{1,\sqrt{C_G}\}\frac{\varrho}{2}$ be small enough such that $r_0+\bar{C}r_0^2\le 2r_0$.  For $s\le 2r_0$, define the ellipsoid (recall that $G(x',x')$ is positive definite)
\begin{align*}
	\mathcal{E}(x',s)
&=\proj(\{y\in U\colon \bar{\gamma}(x,y)\le s\})\\
&=\{y'\in B(0,\lambda)\colon \langle G(x',x')\cdot (y'-x'),y'-x'\rangle \le s^2\},
\end{align*}
which is contained in $B(x',s/\sqrt{c_G})\subset B(0,2\lambda)$. Its volume is
\[
\vol(\mathcal{E}(x',s))=\frac{\kappa_{d-1}s^{d-1}}{\sqrt{\det q_{x'}}}.
\]
Assume that $r<r_0$. By \eqref{eq:ell-app} we have the inclusions 
\begin{equation} \label{eq:proj-ellipsoid}
	\mathcal{E}(x',r-\bar{C}r^2) \subset \proj(B_{\gamma}(x,r)) \subset \mathcal{E}(x',r+\bar{C}r^2).
\end{equation}
Similar as in the proof of Lemma~\ref{lem:hd-covering}, by Taylor's theorem and the bounds on the derivatives of $f$ it holds that, for any $u\in B(x',2r/\sqrt{c_G})$,
\[
\big|\det q_{u}-\det q_{x'}\big|
\le \sum_{\sigma}\Big|\prod_{i=1}^{d-1}G_{i,\sigma(i)}(u,u)-\prod_{i=1}^{d-1}G_{i,\sigma(i)}(x',x')\Big|
\le C r
\]
where the sum is over all permutations of $\{1,\dots,d-1\}$ and $C>0$ only depends on $K$ (and may change from line to line in the remainder of the proof). Therefore, by \eqref{eq:proj-ellipsoid} and Lemma~\ref{lem:pd-uniform}, it holds that
\begin{align*}
\vol_{\gamma}(B_{\gamma}(x,r))	
&=\int_{\proj(B_{\gamma}(x,r))}\sqrt{\det q_u}\dd u\\
&\le \int_{\mathcal{E}(x',r+\bar{C}r^2)}\sqrt{\det q_{u}}\dd u\\
&\le (1+C r)\sqrt{\det q_{x'}}\vol(\mathcal{E}(x',r+\bar{C}r^2))\\
&\le (1+C r)(1+\bar{C}r)^{d-1}\kappa_{d-1}r^{d-1},
\end{align*}
which gives 
\[
\vol_{\gamma}(B_{\gamma}(x,r))-\kappa_{d-1}r^{d-1}\le C r^d.
\]
The proof of the lower bound is analogous.
\end{proof}

For the proof of Theorem~\ref{thm:main}, we use the following limit theorem from \cite{Jan87}, see \cite{Jan86,Pen23} for related results. 

\begin{lem}\label{lem:janson}
Let $K\in \cK^3_+$. For each $n\in \IN$, let $X_1,\dots,X_n$ be independently distributed on $\partial K$ with density $h_{\kappa}$. Let $P_n=\{X_1,\dots,X_n\}$ and set 
\[
V(P_n)
=\sup\Big\{\frac{\vol_{\gamma}(B_{\gamma}(x,r))}{\vol_{\gamma}(\partial K)}\colon x\in \partial K, r>0 \text{ with } B_{\gamma}(x,r)\cap P_n=\emptyset\Big\}.
\]
Then, as $n\to \infty$, 
\begin{equation}
	\label{eq:V-gumbel}
nV(P_n)-\log n -(d-2)\log\log n-\log \alpha 
\xrightarrow[]{\rm d}
G,
\end{equation}
where $\alpha$ is as in Theorem~\ref{thm:main} and $G$ is a standard Gumbel distributed random variable.
\end{lem}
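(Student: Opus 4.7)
The strategy is to reduce \eqref{eq:V-gumbel} to the Euclidean limit theorem of \cite{Jan87} on the maximum volume of an empty ball among uniform i.i.d.\ points in a bounded region of $\IR^{d-1}$. First, observe that $h_{\kappa}=\sqrt{\kappa_K}/v_{\kappa}(K)$ renders the $X_i$ uniformly distributed with respect to the normalized Riemannian volume $\vol_{\gamma}/\vol_{\gamma}(\partial K)$ on $\partial K$. By Lemma~\ref{lem:vol-geo}, $\vol_{\gamma}(B_{\gamma}(x,r))/\vol_{\gamma}(\partial K)=(\kappa_{d-1}/\vol_{\gamma}(\partial K))\,r^{d-1}(1+O(r))$ uniformly in $x$. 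Since for each $x$ the maximal radius of an empty geodesic ball centered at $x$ is $D(x):=\min_{i}\gamma(x,X_i)$ and $\varrho(P_n)=\max_x D(x)$, taking the supremum yields
\[
V(P_n)=\frac{\kappa_{d-1}}{\vol_{\gamma}(\partial K)}\,\varrho(P_n)^{d-1}\bigl(1+O(\varrho(P_n))\bigr).
\]
As $\varrho(P_n)=O_{\IP}((\log n/n)^{1/(d-1)})$, the multiplicative $O(\varrho(P_n))$ factor contributes $o(1/\log n)$ to $nV(P_n)$, so it suffices to establish the Gumbel limit for $\kappa_{d-1}\varrho(P_n)^{d-1}/\vol_{\gamma}(\partial K)$.

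I would then transfer this maximal spacing statistic to a Euclidean one via local charts. Fix a finite cover of $\partial K$ by neighborhoods $U^{(p_1)},\dots,U^{(p_m)}$ as in the proof of Lemma~\ref{lem:hd-covering}, and in each chart compose $\proj_{p_\ell}$ with the linear map $T_\ell:=\sqrt{q_0^{(p_\ell)}}$ to straighten the osculating quadratic form $b_0^{(p_\ell)}$ to $\|\cdot\|^2/2$. By \eqref{eq:osculating}, Lemma~\ref{lem:pd-uniform}, and continuity of the second fundamental form, the pushforward geodesic metric agrees with the Euclidean one on balls of radius $r$ up to a multiplicative $1+O(r)$, and the pushforward of the sample has a smooth density with respect to Lebesgue measure that is constant to leading order. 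Thus the extremal empty geodesic ball corresponds, up to the same multiplicative error, to the extremal empty Euclidean ball in $\IR^{d-1}$.

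Next, I would invoke \cite{Jan87} (cf.\ also \cite{Jan86,Pen23}) in this Euclidean setting: for $n$ uniform i.i.d.\ points in a bounded domain of $\IR^{d-1}$, the largest normalized volume of an empty Euclidean ball obeys a Gumbel limit with location $\log n+(d-2)\log\log n+\log\alpha_0$ for an explicit dimensional constant $\alpha_0$, which a direct calculation identifies with the prescribed $\alpha$. The local Gumbel limits are glued together by a standard Poissonization argument: the probability that the extremal ball straddles more than one chart, or is centered within a $r_n$-neighborhood of a chart boundary, is negligible on the Gumbel scale, and the maximum of finitely many local Gumbel statistics with common normalization is again a Gumbel with the same parameters (the finite multiplicity $m$ being absorbed into the normalization of $\vol_{\gamma}$ on each chart).

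The main obstacle is to verify that the $O(r)$ approximation errors, both from Lemma~\ref{lem:vol-geo} and from the metric straightening, are negligible at the $\log\log n$ scale of $nV(P_n)$. The decisive estimate is $r_n\log n\to 0$, where $r_n=(\log n/n)^{1/(d-1)}$ is the order of the extremal empty-ball radius; this holds for every $d\ge 2$ since $(\log n)^{d/(d-1)}/n^{1/(d-1)}\to 0$. Equally delicate is the identification of $\alpha_0$ with the prescribed $\alpha=\frac{1}{(d-1)!}\bigl(\frac{\sqrt{\pi}\Gamma((d+1)/2)}{\Gamma(d/2)}\bigr)^{d-2}$, which requires carefully tracking the dimensional constants through Janson's proof for the specific case of Euclidean balls in $\IR^{d-1}$.
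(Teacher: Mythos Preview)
The paper does not give a self-contained proof of this lemma: it simply records that the statement is a direct instance of Janson's limit theorem in \cite{Jan87} (the theorem together with the second remark on p.~276, which extends the result to Riemannian manifolds via the argument in \cite[Section~8]{Jan86}). Janson's framework already accommodates a compact Riemannian manifold equipped with its volume measure, so once one observes that sampling with density $h_\kappa$ is the same as sampling from the normalized Riemannian volume $\vol_\gamma$, the lemma follows by citation. No chart-based reduction to a Euclidean problem is required, and Lemma~\ref{lem:vol-geo} is not used here; in the paper it is applied afterwards, in the opposite direction, to pass from $V(P_n)$ to $\varrho(P_n)^{d-1}$.

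Your proposal instead attempts a hands-on rederivation of Janson's manifold extension: flatten locally via charts, apply the Euclidean version of \cite{Jan87} in each chart, and glue. That is a legitimate programme, and is essentially what \cite[Section~8]{Jan86} does, but your gluing step as written is not correct. The maximum of $m$ asymptotically independent Gumbel variables with locations $\mu_1,\dots,\mu_m$ is Gumbel with location $\log\sum_\ell e^{\mu_\ell}$, not ``with the same parameters''; the right combination is additive at the level of Poisson intensities, not a pointwise maximum. Concretely, under Poissonization the expected number of empty geodesic balls of normalized volume exceeding $t/n$ in chart $\ell$ is asymptotically $\alpha\,\frac{\vol_\gamma(U^{(p_\ell)})}{\vol_\gamma(\partial K)}\,e^{-(t-\log n-(d-2)\log\log n)}$, and it is the sum of these over $\ell$ (giving total mass $\alpha$) that produces the limiting Gumbel, not the maximum of $m$ separate Gumbel limits with ``the multiplicity $m$ absorbed''. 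You also need to handle the artificial chart boundaries: in Janson's Euclidean theorem for bounded domains, balls touching the boundary contribute to the limit, whereas on the closed manifold $\partial K$ there is no boundary; the correct treatment is to show that the Poisson intensity is insensitive to how the manifold is partitioned, which again comes out of the additive Poisson picture rather than a max-of-Gumbels heuristic. Finally, your first paragraph (passing from $V(P_n)$ to $\varrho(P_n)^{d-1}$ and back) is a detour: $V(P_n)$ is already the quantity to which Janson's theorem applies directly.
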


More precisely, Lemma~\ref{lem:janson} follows from the theorem and the second remark on p.~276  in \cite{Jan87} which refers to the argument employed in \cite[Section~8]{Jan86}.  Note that the results in \cite{Jan86,Jan87} were also used in \cite{Ser23} to obtain a limit theorem for the covering radius on the sphere and in \cite{GS96} to obtain \eqref{eq:gs-smooth}. 

\begin{proof}[Proof of Theorem~\ref{thm:main}]
	Let $P_n=\{X_1,\dots,X_n\}$ be as in the statement of the theorem. By Lemma~\ref{lem:vol-geo} the assumptions of \cite[Theorem~2.1]{RS16} are fulfilled and the geodesic covering radius satisfies
\begin{equation} \label{eq:covering-bound}
	\varrho(P_n)=O_{\IP}\Big(\Big(\frac{\log n}{n}\Big)^{\frac{1}{d-1}}\Big).
\end{equation}
Here and in the following, we write $X_n=O_{\IP}(a_n)$ for random variables $X_n$ and scalars $a_n$ if for every $\varepsilon>0$ there exists a constant $C>0$ such that $\IP[X_n\ge Ca_n]<\varepsilon$ for all $n$ large enough. It follows from Theorem~\ref{thm:hd-covering} and \eqref{eq:covering-bound} that
\begin{equation} \label{eq:delta-asymp}
\delta_H(K_n,K)
=\frac{\varrho(P_n)^2}{2}+O_{\IP}\Big(\Big(\frac{\log n}{n}\Big)^{\frac{3}{d-1}}\Big).
\end{equation}

In the following, we prove a limit theorem for $\varrho(P_n)^2$. By definition, every geodesic ball with radius larger than $\varrho(P_n)$ contains one of the points of $P_n$ and by compactness there is $x_0\in \partial K$ such that the ball $B_{\gamma}(x_0,\varrho(P_n))$ does not contain any point of $P_n$ in its interior. Thus, the random variable $V(P_n)$ defined in Lemma~\ref{lem:janson} is the Riemannian volume of the ball $B_{\gamma}(x_0,\varrho(P_n))$ divided by $\vol_{\gamma}(\partial K)=v_{\kappa}(K)$.  It follows from Lemma~\ref{lem:vol-geo} and \eqref{eq:covering-bound} that
\begin{equation*} 
V(P_n)
=\frac{\kappa_{d-1}}{v_{\kappa}(K)}\varrho(P_n)^{d-1}+ O_{\IP}\Big(\Big(\frac{\log n}{n}\Big)^{\frac{d}{d-1}}\Big).
\end{equation*}
Plugging this into the limit theorem \eqref{eq:V-gumbel} yields 
\begin{equation} \label{eq:rho-d-1}
\frac{\kappa_{d-1}}{v_{\kappa}(K)} n\varrho(P_n)^{d-1}-\log n -(d-2)\log\log n-\log \alpha 
\xrightarrow[]{\rm d}
G.
\end{equation}

Applying the uniform Delta Method \cite[Theorem~3.8]{VdV00} to the limit theorem in \eqref{eq:rho-d-1} and the function $x\mapsto \frac{d-1}{2}x^{\frac{2}{d-1}}$ gives
\begin{equation} \label{eq:limit-theorem}
	(d-1)\log n\Big[ \Big(\frac{\kappa_{d-1}}{v_{\kappa}(K)}\frac{n}{\log n}\Big)^{\frac{2}{d-1}} \frac{\varrho(P_n)^2}{2}-\frac{\theta_n}{2}\Big]\xrightarrow[]{\rm d} G,
\end{equation}
where 
\begin{align*}
\theta_n
&=\Big(1+\frac{(d-2)\log \log n+\log \alpha}{\log n}\Big)^{\frac{2}{d-1}}\\
&=1+\frac{2}{d-1}\frac{(d-2)\log \log n+\log \alpha}{\log n}+O\Big(\Big(\frac{\log \log n}{\log n}\Big)^2\Big).
\end{align*}
Combining the limit theorem in \eqref{eq:limit-theorem} with the asymptotics in \eqref{eq:delta-asymp} completes the proof of Theorem~\ref{thm:main}. 
\end{proof}

\subsection*{Acknowledgement}

This research was funded in whole or in part by the Austrian Science Fund (FWF) [Grant DOI: 10.55776/J4777]. For open access purposes, the author has applied a CC BY public copyright license to any author-accepted manuscript version arising from this submission. 

\bibliographystyle{abbrv}
\bibliography{nclt_convex}

\end{document}